\documentclass[12pt,conference]{IEEEtran}
\ifCLASSOPTIONcompsoc
  \usepackage[nocompress]{cite}
\else
  \usepackage{cite}
\fi


\usepackage{amsmath,amssymb,url}
\usepackage{epic}
\usepackage{eepic}
\newsavebox{\wwide}
\newcommand{\wwidehat}[1]{\sbox{\wwide}{$#1$}
\ifdim\wd\wwide < 1.1 em \widehat{#1} \else
\setlength
{\unitlength}{0.01\wd\wwide}\overset
{\begin{picture}(100,6)
\path(0,0)(50,6)(100,0)
\end{picture}}{#1}\fi}

\newcommand{\wwidetilde}[1]{\sbox{\wwide}{$#1$}
\ifdim\wd\wwide < 1.1 em \widetilde{#1} \else
\setlength
{\unitlength}{0.01\wd\wwide}\overset
{\begin{picture}(100,6)
\path(0,0)(33,6)(45,6)(55,0)(67,0)(100,6)
\end{picture}}{#1}\fi}

\ifx\pdfoutput\@undefined\usepackage[usenames,dvips]{color}
\else\usepackage[usenames,dvipsnames]{color}
\IfFileExists{pdfcolmk.sty}{\usepackage{pdfcolmk}}{} 
\fi

\definecolor{ChadDarkBlue}{rgb}{.1,0,.2}  
\definecolor{ChadBlue}{rgb}{.1,.1,.5}  
\definecolor{ChadRoyal}{rgb}{.2,.2,.8}  
\definecolor{ChadGreen}{rgb}{0,.4,0}    
\definecolor{ChadRed}{rgb}{.5,0,.5}  

\def\zmena#1{{#1}} 
\def\autori#1{{#1}} 

\usepackage{amsthm}
\usepackage{epic,eepic}
\usepackage[mathscr]{euscript}
\usepackage{enumerate}

\usepackage{graphicx}

\usepackage{lineno}
\usepackage{comment}



\def\smallskip{\vskip\smallskipamount}
\def\medskip{\vskip\medskipamount}
\def\bigskip{\vskip\bigskipamount}

\numberwithin{equation}{section}

\theoremstyle{plain}
\newtheorem{theorem}{Theorem}[section]
\newtheorem{lemma}[theorem]{Lemma}

\newtheorem{corollary}[theorem]{Corollary}

\theoremstyle{definition}
\newtheorem{definition}[theorem]{Definition}

\newtheorem{remark}[theorem]{Remark}

\newtheorem{statement}[theorem]{Statement}

\def\operatorname#1{{\mathop{\rm #1}}}
\newcommand{\ord}{\operatorname{ord}}

\def\comp{\leftrightarrow}

\newcommand\implik{{\ \Longrightarrow\ }}



\newcounter{ok}
{\end{list}}

\newcounter{aok}
{\end{list}}

\def\go#1;#2;#3 {\vbox to0pt{\kern-#3\hbox{\kern#2 #1}\vss}\nointerlineskip}

\def\moplus{\oplus}
\def\mominus{\ominus}




\newcommand{\Mea}{{\text{\rm{}M}}}

\newcommand{\Tea}{{\text{\rm{}T}}}

\newcommand{\C}{\text{\rm{}C}}
\newcommand{\Sh}{\text{\rm{}S}}

\newcommand{\itrm}[1]{\item[{\rm {#1}}]}
\newcommand{\Cen}{\text{\rm{}C}}




 \renewcommand{\le}{\leqslant}


\hyphenation{op-tical net-works semi-conduc-tor}
\begin{document}
\title{Triple Representation Theorem for homogeneous effect algebras}
\author{\IEEEauthorblockN{Josef~Niederle and Jan~Paseka}
\IEEEauthorblockA{Department of Mathematics and Statistics\\
		Faculty of Science, Masaryk University\\
		{Kotl\'a\v r{}sk\' a\ 2}, CZ-611~37~Brno, Czech Republic\\
        E-mail: niederle,paseka@math.muni.cz\\[1cm]
{\it Dedicated to Ivo G. Rosenberg on the occasion of his 77th birthday} \\{\it in appreciation of his contributions to the fields of multiple-valued logic}}}




\markboth{Triple Representation Theorem for homogeneous effect algebras}%
{Triple Representation Theorem for homogeneous effect algebras}
%

%




\IEEEcompsoctitleabstractindextext{%
\begin{abstract}
The aim of our paper is to prove the Triple Representation Theorem,
which was established 
by Jen\v{c}a in the setting of complete lattice effect 
algebras, for a special class of 
homogeneous effect algebras, namely TRT-effect algebras. This class 
includes complete  lattice effect algebras, 
sharply dominating Archimedean 
atomic  lattice effect algebras and 
homogeneous orthocomplete effect algebras.
\end{abstract}

\begin{IEEEkeywords}
Homogeneous effect algebra, 
TRT-effect algebra,
orthocomplete effect algebra, 
lattice effect algebra, MV-algebra, block, center, atom, 
sharp element, meager element, 
sharply dominating effect algebra.
\end{IEEEkeywords}}

\maketitle  

\IEEEdisplaynotcompsoctitleabstractindextext

%

{\section*{Introduction}}

\label{intro}

\IEEEPARstart{T}{wo} equivalent quantum structures, D-posets and effect algebras were
introduced in the nineties of the twentieth century. These
were considered as "unsharp" generalizations of the
structures which arise in quantum mechanics, in particular, of orthomodular
lattices and MV-algebras. Effect algebras aim to
describe "unsharp" event structures in quantum mechanics
in the language of algebra.

Effect algebras are fundamental in investigations of fuzzy probability
theory too. In the fuzzy probability frame, the elements of an effect algebra
represent fuzzy events which are used to construct fuzzy random variables.

The aim of our paper is to prove the Triple Representation Theorem,
which was established 
by Jen\v{c}a in \cite{jenca} in the setting of complete lattice effect 
algebras, for a special class of 
homogeneous effect algebras, namely TRT-effect algebras. This class 
includes complete  lattice effect algebras, 
sharply dominating  Archimedean 
atomic  lattice effect algebras (see \cite{niepa}) and 
homogeneous orthocomplete effect algebras (see \cite{niepa3}).


\medskip

\section{{Preliminaries} and basic facts}

\zmena{Effect algebras were introduced by Foulis and 
 Bennett (see \cite{FoBe}) 
for modelling unsharp measurements in a Hilbert space. In this case the 
set $\EuScript E(H)$ of effects is the set of all self-adjoint operators $A$ on 
a Hilbert space $H$ between the null operator $0$ and the identity 
operator $1$ and endowed with the partial operation $+$ defined 
iff  $A+B$ is in $\EuScript E(H)$, where $+$ is the usual operator sum. }

\zmena{In general form, an effect algebra is in fact a partial algebra 
with one partial binary operation and two unary operations satisfying 
the following axioms due to Foulis and 
 Bennett.}

\begin{definition}\label{def:EA}{\autori{{\rm{}\cite{FoBe}  }}
{\rm A partial algebra $(E;\oplus,0,1)$ is called an {\em effect algebra} if
$0$, $1$ are two distinct elements, 
called the {\em zero} and the {\em unit}  element,  and $\oplus$ is a partially
defined binary operation called the {\em orthosummation} 
on $E$ which satisfies the following
conditions for any $x,y,z\in E$:
\begin{description}
\item[\rm(Ei)\phantom{ii}] $x\oplus y=y\oplus x$ if $x\oplus y$ is defined,
\item[\rm(Eii)\phantom{i}] $(x\oplus y)\oplus z=x\oplus(y\oplus z)$  if one
side is defined,
\item[\rm(Eiii)] for every $x\in E$ there exists a unique $y\in
E$ such that $x\oplus y=1$ (we put $x'=y$),
\item[\rm(Eiv)\phantom{i}] if $1\oplus x$ is defined then $x=0$.
\end{description}
}%
{\rm{}$(E;\oplus,0,1)$ is  called an {\em orthoalgebra} if 
$x\oplus x$ exists implies that $x= 0$  (see \cite{grerut}).}}
\end{definition}

We often denote the effect algebra $(E;\oplus,0,1)$ briefly by
$E$. On every effect algebra $E$  a partial order
$\le$  and a partial binary operation $\ominus$ can be 
introduced as follows:
\begin{center}
$x\le y$ \mbox{ and } {\autori{$y\ominus x=z$}} \mbox{ iff }$x\oplus z$
\mbox{ is defined and }$x\oplus z=y$\,.
\end{center}

If $E$ with the defined partial order is a lattice (a complete
lattice) then $(E;\oplus,0,1)$ is called a {\em lattice effect
algebra} ({\em a complete lattice effect algebra}).

Mappings from one effect algebra to
another one that preserve units and orthosums are called 
{\em morphisms of effect algebras}, and bijective
morphisms of effect algebras having inverses that are 
morphisms of effect algebras are called 
{\em isomorphisms of effect algebras}.

\begin{definition}\label{subef}{\rm
Let $E$ be an  effect algebra.
Then $Q\subseteq E$ is called a {\em sub-effect algebra} of  $E$ if 
\begin{enumerate}
\item[(i)] $1\in Q$
\item[(ii)] if out of elements $x,y,z\in E$ with $x\oplus y=z$
two are in $Q$, then $x,y,z\in Q$.
\end{enumerate}
}
\end{definition}

Note that a sub-effect algebra $Q$ 
 of an  effect algebra $E$ 
 with inherited operation 
$\oplus$ is an  effect algebra  
in its own right.

\begin{definition}

\rm
{(1)}: A {\em generalized effect algebra} ($E$; $\oplus$, 0) is a set $E$ with element $0\in E$ and partial
binary operation $\oplus$ satisfying, for any $x,y,z\in E$, the conditions
\medskip

{\advance\parindent by 5pt
\begin{tabular}{@{}c@{}l}
&\begin{minipage}{0.47679454\textwidth}
\begin{enumerate}
\item[(GE1)] $x\oplus y = y\oplus x$ if one side is defined,

\item[(GE2)] $(x\oplus y)\oplus z=x\oplus(y\oplus z)$ if one side is
defined,

\item[(GE3)] if $x\oplus y=x\oplus z$ then $y=z$,

\item[(GE4)] if $x\oplus y=0$ then $x=y=0$,

\item[(GE5)] $x\oplus 0=x$ for all $x\in E$.
\end{enumerate}
\end{minipage}
\end{tabular}}
\medskip

\begin{description}
\item[(2)] A binary relation $\le$ (being a partial order) and 
a partial binary operation $\ominus$ on $E$ can be defined by:
\begin{center}
$x\le y$ \mbox{ and } {{$y\ominus x=z$}} \mbox{ iff }$x\oplus z$
\mbox{ is defined and }$x\oplus z=y$\,.
\end{center} 
\item[(3)] A nonempty subset $Q\subseteq E$ is called a {\em sub-generalized effect algebra} 
of $E$ if out of elements $x,y,z\in E$ with $x\oplus y=z$ at least two are in $Q$ then $x,y,z\in
Q$.   
\end{description}

\end{definition}


Every sub-generalized effect algebra of $E$ is a generalized effect algebra in its own right. Every effect algebra is a generalized effect algebra.

For an element $x$ of a generalized effect algebra $E$ we write
$\ord(x)=\infty$ if $nx=x\oplus x\oplus\dots\oplus x$ ($n$-times)
exists for every positive integer $n$ and we write $\ord(x)=n_x$
if $n_x$ is the greatest positive integer such that $n_xx$
exists in $E$.  A generalized effect algebra $E$ is {\em Archimedean} if
$\ord(x)<\infty$ for all $x\in E$.

A minimal nonzero element of a generalized effect algebra  $E$
is called an {\em atom}  and $E$ is
called {\em atomic} if below every nonzero element of 
$E$ there is an atom.

\begin{definition}
\rm
We say that a finite system $F=(x_k)_{k=1}^n$ of not necessarily
different elements of a generalized effect algebra $E$ is
\zmena{\em orthogonal} if $x_1\oplus x_2\oplus \cdots\oplus
x_n$ (written $\bigoplus\limits_{k=1}^n x_k$ or $\bigoplus F$) exists
in $E$. Here we define $x_1\oplus x_2\oplus \cdots\oplus x_n=
(x_1\oplus x_2\oplus \cdots\oplus x_{n-1})\oplus x_n$ supposing
that $\bigoplus\limits_{k=1}^{n-1}x_k$ is defined and
$(\bigoplus\limits_{k=1}^{n-1}x_k)\oplus x_n$ exists. We also define 
$\bigoplus \emptyset=0$.
An arbitrary system
$G=(x_{\kappa})_{\kappa\in H}$ of not necessarily different
elements of $E$ is called \zmena{\em orthogonal} if $\bigoplus K$
exists for every finite $K\subseteq G$. We say that for 
an \zmena{orthogonal} 
system $G=(x_{\kappa})_{\kappa\in H}$ the
element $\bigoplus G$ exists iff
$\bigvee\{\bigoplus K
\mid
K\subseteq G$ is finite$\}$ exists in $E$ and then we put
$\bigoplus G=\bigvee\{\bigoplus K\mid K\subseteq G$ is
finite$\}$. We say that $\bigoplus G$ is the {\em orthogonal sum} 
of $G$ and $G$ is {\em orthosummable}. (Here we write $G_1\subseteq G$ iff there is
$H_1\subseteq H$ such that $G_1=(x_{\kappa})_{\kappa\in
H_1}$).
{
We denote $G^\oplus:=\{\bigoplus K\mid K\subseteq G$ is   
finite$\}$. $G$ is called {\em bounded} if there is an upper bound 
of $G^\oplus$.

A generalized effect algebra
$E$ is called \emph{orthocomplete} if  every bounded orthogonal 
system  is orthosummable.}

A generalized effect algebra $E$ has the {\em maximality property} 
if $\{ u, v \}$ has a maximal
lower bound $w$ for every $u, v \in E$.
\end{definition}

{\renewcommand{\labelenumi}{{\normalfont  (\roman{enumi})}}
\begin{definition}
\rm
An element $x$ of an effect algebra $E$ is called  
\begin{enumerate}
\item  \emph{sharp} if $x\wedge x'=0$. The set 
$\Sh(E)=\{x\in E \mid x\wedge x'=0\}$ is called the \emph{set of all sharp elements} 
of $E$ (see \cite{gudder1}).
\item  \emph{principal}, if $y\oplus z\leq x$ for every $y, z\in E$ 
such that $y, z \leq x$ and $y\oplus z$ exists.
\item  \emph{central}, if $x$ and $x'$ are principal and,
for every $y \in E$ there are $y_1, y_2\in E$ such that 
$y_1\leq x, y_2\leq x'$, and $y=y_1\oplus y_2$  (see \cite{GrFoPu}). 
The \emph{center} 
$\C(E)$ of $E$ is the set of all central elements of $E$.
\end{enumerate}
\end{definition}

If $x\in E$ is a principal element, then $x$ is sharp and the interval 
$[0, x]$ is an effect algebra with the greatest element $x$ and the partial 
operation given by restriction of $\oplus$ to $[0, x]$. 

\begin{statement}{\rm \cite[Theorem 5.4]{GrFoPu}}\label{grfopu}
The center $\C(E)$ of an effect algebra $E$ is a sub-effect 
algebra of $E$ and forms a Boolean algebra. For every
central element $x$ of $E$, $y=(y\wedge x)\oplus (y\wedge x')$ for all 
$y\in E$.  If $x, y\in \C(E)$ are orthogonal, we have 
$x\vee y =  x\oplus y$  and $x\wedge y =  0$. 
\end{statement}

{\renewcommand{\labelenumi}{{\normalfont  (\roman{enumi})}}
\begin{statement}{\rm\cite[Lemma   3.1.]{jencapul}}\label{gejzapulm} Let $E$ be an effect algebra, 
$x, y\in E$ and $c, d\in \C(E)$. Then:
\begin{enumerate}
\item If $x\oplus y$ exists then  $c\wedge (x\oplus y)=(c\wedge x)\oplus (c\wedge y)$.
\item If $c\oplus d$ exists then  $x\wedge (c\oplus d)=(x\wedge c)\oplus (x\wedge d)$.  
\end{enumerate}
\end{statement}


\begin{definition}{\,\rm{}(\cite{jenca2001,jenca}\,)} 
\rm
A subset $M$ of a generalized effect algebra $E$ is called 
\emph{internally compatible} (\emph{compatible}) if for every finite 
subset $M_F$ of $M$  there is a finite orthogonal family $(x_1, \dots, x_n)$ 
of elements from $M$ ($E$) such that for every $m\in  M_F$ 
there is a set $A_F\subseteq \{ 1, \dots, n \}$ with 
$m =\bigoplus_{i\in A_F} x_i$. If $\{ x, y \}$ is a compatible set, 
we write $x\comp y$ (see \cite{jenca, Kop2}).
\end{definition}

\section{Homogeneous  effect algebras}
\label{orto}

\begin{definition}\label{rdp}{\rm
An effect algebra $E$  satisfies the \emph{Riesz decomposition property}
(or RDP) if, for all $u,v_1, v_2\in E$ such that
$u\leq v_1\oplus v_2$, there are $u_1, u_2$ such that 
$u_1\leq v_1, u_2\leq v_2$ and $u=u_1\oplus u_2$. 
\\
A lattice effect algebra in which RDP holds is called an \emph{MV-effect
algebra}.
\\
An effect algebra $E$ is called \emph{homogeneous} if, 
for all $u,v_1,v_2\in E$ such that $u\leq v_1\oplus v_2\leq u'$, 
there are $u_1,u_2$ such that $u_1\leq v_1$, $u_2\leq v_2$ 
and $u=u_1\oplus u_2$ (see \cite{jenca2001}). }
\end{definition}

\begin{statement}\label{gejzasum}{\rm\cite[Proposition 2]{jenca}}
\nopagebreak
\begin{enumerate}
\item[{\rm (i)}]    Every orthoalgebra is homogeneous.
\itrm{(ii)}    Every lattice effect algebra is homogeneous.
\itrm{(iii)}    An effect algebra  $E$ has the Riesz decomposition 
property if and only if  $E$ is homogeneous and compatible.

Let $E$ be a homogeneous effect algebra.
\itrm{(iv)}    A subset $B$ of $E$ is a maximal sub-effect 
algebra of $E$ with the Riesz decomposition property (such  
$B$ is called a {\em block} of  $E$) if and only if  $B$ 
is a maximal internally compatible subset of $E$ containing $1$.
\itrm{(v)} Every finite compatible subset of $E$ is a subset of 
some block. This implies that every homogeneous effect algebra 
is a union of its blocks.
\itrm{(vi)}    $\Sh(E)$ is a sub-effect algebra of $E$.
\itrm{(vii)}    For every block $B$, $\C(B) = \Sh(E)\cap B$.
\itrm{(viii)}    Let $x\in B$, where $B$ is a block of $E$. 
Then $\{ y \in E \mid  y \leq x\ \text{and}\ y \leq x'\}\subseteq  B$.
\end{enumerate}
\end{statement}

An important class of effect algebras was introduced by Gudder in \cite{gudder1} 
and \cite{gudder2}. Fundamental example is the 
standard Hilbert spaces effect algebra ${\mathcal E}({\mathcal H})$.

For an element $x$ of an effect algebra $E$ we denote
$$
\begin{array}{r c  l@{}c l}
\widetilde{x}&=\bigvee_{E}\{s\in \Sh(E) \mid s\leq x\}&%
&\begin{array}{l}
\text{if it exists and}\\
 \text{belongs to}\ \Sh(E),
\end{array}\\
\wwidehat{x}&=\bigwedge_{E}\{s\in \Sh(E) \mid s\geq x\}&%
&\begin{array}{l}%
\text{if it exists and}\\
\text{belongs to}\ \Sh(E).
\end{array}
\end{array}
$$

\begin{definition} {\rm{}\mbox{(\,~\cite{gudder1,gudder2}\,)} 
An effect algebra $(E; \oplus, 0,
1)$ is called {\em sharply dominating} if for every $x\in E$ there
exists $\wwidehat{x}$, the smallest sharp element  such that $x\leq
\wwidehat{x}$. That is $\wwidehat{x}\in \Sh(E)$ and if $y\in \Sh(E)$ satisfies
$x\leq y$ then $\wwidehat{x}\leq y$. }
\end{definition}

Recall that evidently an effect algebra $E$ is sharply dominating iff 
for every $x\in E$ there exists $\widetilde{x}\in \Sh(E)$ such
that $\widetilde{x}\leq x$ and if $u\in \Sh(E)$
satisfies $u\leq x$ then $u\leq \widetilde{x}$.

In what follows set (see \cite{jenca,wujunde})
$$\begin{array}{r@{}l}
\Mea(E)=\{x\in E \mid\ &\text{if}\ v\in \Sh(E)\ \text{satisfies}\\ &v\leq x\ 
\text{then}\ v=0\}.
\end{array}$$

An element $x\in \Mea(E)$ is called {\em meager}. Moreover, $x\in \Mea(E)$ 
iff $\widetilde{x}=0$. Recall that $x\in \Mea(E)$, $y\in E$, $y\leq x$ implies 
$y\in \Mea(E)$ and $x\ominus y\in \Mea(E)$. Moreover, we have a map 
$h:\Sh(E)\to 2^{\Mea(E)}$ that is given by 
$h(s)=\{x\in \Mea(E) \mid x\leq s\}$. 

Recall that $\Mea(E)$ equipped with a partial 
operation $\oplus_{\Mea(E)}$ which is defined, for all $x, y\in \Mea(E)$, 
by $x\oplus_{\Mea(E)} y$ exists
if and only if $x\oplus_E y$  exists and $x\oplus_E y\in \Mea(E)$ in which 
case $x\oplus_{\Mea(E)} y=x\oplus_E y$ is a generalized  effect algebra.

{\renewcommand{\labelenumi}{{\normalfont  (\roman{enumi})}}
\begin{statement}\label{jmpy2} {\rm\cite[Lemma 2.4]{niepa}} Let 
$E$ be an effect algebra 
in which $\Sh(E)$ is a sub-effect algebra of $E$ and 
let $x\in \Mea(E)$ such that $\wwidehat{x}$  exists. Then 
\begin{enumerate}
\settowidth{\leftmargin}{(iiiii)}
\settowidth{\labelwidth}{(iii)}
\settowidth{\itemindent}{(ii)}
\item $\wwidehat{x}\ominus x\in \Mea(E)$.
\item If $y\in \Mea(E)$ such that $x\oplus y$ %
exists and $x\oplus y=z\in\Sh(E)$ then  $\wwidehat{x}=z$. 
\end{enumerate}
\end{statement}}

{\renewcommand{\labelenumi}{{\normalfont  (\roman{enumi})}}
\begin{statement}\label{jpy2} {\rm\cite[Lemma 2.5]{niepa}} Let $E$ be an effect algebra 
in which $\Sh(E)$ is a sub-effect algebra of $E$ and 
let $x\in E$ such that $\widetilde{x}$  
exists. Then 
${x\ominus \widetilde{x}}\in  \Mea(E)$ and 
$x=\widetilde{x}\oplus {(x\ominus \widetilde{x})}$ is 
the unique decomposition $x = x_S \oplus x_M$, 
where $x_S\in\Sh(E)$ and $x_M \in \Mea(E)$. Moreover, 
$x_S\wedge x_M=0$ and if $E$ is a lattice effect algebra then 
 $x = x_S \vee x_M$. 
\end{statement}}

As proved in \cite{cattaneo}, 
$\Sh(E)$ is always a sub-effect algebra in 
a sharply dominating  effect algebra $E$.

\begin{statement}\label{honzazasum}{\rm\cite[Corollary 14]{jenca}, 
\cite[Lemma 4.2]{niepa3}} Let $E$ be an orthocomplete homogeneous effect algebra.
\nopagebreak
\begin{enumerate}
\item[{\rm (i)}]   $E$ is sharply dominating and, for every block $B$ of $E$, $x \in B$ implies 
that $[\widetilde{x}, x]\subseteq B$.
\itrm{(ii)}  Let $x\in \Mea(E)$. 
Then $y=\wwidehat{x}\ominus x$ is the only element such that
\begin{enumerate}
\settowidth{\leftmargin}{(iiiii)}
\settowidth{\labelwidth}{(iii)}
\settowidth{\itemindent}{(ii)}
\item $y\in \Mea(E)$ such that $\wwidehat{y}=\wwidehat{x}$. 
\item $x \oplus_{\Mea(E)} (y\ominus_{\Mea(E)} (x\wedge y))$ 
exists and 
$x \oplus_{\Mea(E)} (y\ominus_{\Mea(E)} (x\wedge y))\in %
h(\wwidehat{x})$.
\item For all $z \in h(\wwidehat{x})$, 
$z \oplus_{\Mea(E)} x \in h(\wwidehat{x})$
if and only if $z \leq y$ and 
$\wwidehat{y\ominus_{\Mea(E)} z}=\wwidehat{x}$.
\end{enumerate}  
\end{enumerate}
\end{statement}


\section{Triple Representation Theorem for 
TRT-effect algebras}\label{tretikapitola}

In what follows $E$ will be always a homogeneous
sharply dominating    
effect algebra such that, for every block $B$ of $E$, $x \in B$ implies 
that $[\widetilde{x}, x]\subseteq B$ and, 
for all $x\in \Mea(E)$ the element 
$y=\wwidehat{x}\ominus x$ is the only element such that
\begin{enumerate}
\settowidth{\leftmargin}{(iiiii)}
\settowidth{\labelwidth}{(iii)}
\settowidth{\itemindent}{(ii)}
\item $y\in \Mea(E)$ such that $\wwidehat{y}=\wwidehat{x}$. 
\item $x \oplus_{\Mea(E)} (y\ominus_{\Mea(E)} (x\wedge y))$ 
exists and 
$x \oplus_{\Mea(E)} (y\ominus_{\Mea(E)} (x\wedge y))\in %
h(\wwidehat{x})$.
\item For all $z \in h(\wwidehat{x})$, 
$z \oplus_{\Mea(E)} x \in h(\wwidehat{x})$
if and only if $z \leq y$ and 
$\wwidehat{y\ominus_{\Mea(E)} z}=\wwidehat{x}$.
\end{enumerate}
We will call such an effect algebra a {\em TRT-effect algebra}.  
In this case $\Sh(E)$ is a sub-effect algebra 
of $E$.
\medskip

Since $E$ is sharply dominating 
we have that, for all $x\in \Mea(E)$, 
$$\begin{array}{r@{}l}
\wwidehat{x}&=\bigwedge_{E}\{s\in \Sh(E) \mid x\in h(s)\}\\
&=%
\bigwedge_{\Sh(E)}\{s\in \Sh(E) \mid x\in h(s)\}.
\end{array}$$ 

This gives us 

\medskip

\begin{tabular}{@{}c@{}l}
&\begin{minipage}{0.479454\textwidth}
\begin{itemize}
\item[(M1)] The mapping\quad $\wwidehat{\phantom{x}}:\Mea(E) \to \Sh(E)$.
\end{itemize}
\end{minipage}
\end{tabular}

\medskip

Note that, if $x\in \Mea(E)$, $s\in \Sh(E)$ and $B$ is a
block of $E$ such that $x, s\in B$ then from 
Statement \ref{grfopu} we have that $x\wedge_{B} s\leq x$ exists. 
Assume that $g\in E$, $g\leq s$ and $g\leq x$. Then $g\in B$ and 
 therefore $g\leq x\wedge_{B} s$. It follows that
 $x\wedge_{E} s=x\wedge_{B} s$. We always have the following partial 
map.

\medskip

\begin{tabular}{@{}c@{}l}
&\begin{minipage}{0.4679454\textwidth}
\begin{itemize}
\item[(M2)] For every $s\in \Sh(E)$, a partial mapping 
$\pi_{s}:\Mea(E) \to h(s)$ is given by 
$\pi_{s}(x)=x\wedge_{E} s$ whenever $\pi_{s}(x)$ is defined.
\end{itemize}
\end{minipage}
\end{tabular}
\medskip 

Therefore, if $x\comp s$  then 
$x\wedge_E s$ exists and $\pi_{s}(x)$ is defined.

\medskip

We also have the following map $R$ which is defined 
entirely in terms of the triple.

\medskip

\begin{tabular}{@{}c@{}l}
&\begin{minipage}{0.4679454\textwidth}
\begin{itemize}
\item[(M3)] The mapping\, $R:\Mea(E) \to \Mea(E)$ 
given by $R(x)=\wwidehat{{x}}\ominus_E x$.
\end{itemize}
\end{minipage}
\end{tabular}
\medskip

Let $x, y\in \Mea(E)$. Let us put 
${\mathscr S}(x, y)=\{z\in \Sh(E)\mid %
z=(z\wedge x)\oplus_E (z\wedge y)\}=\{z\in \Sh(E)\mid %
\pi_z(x) \ \text{and}\ \pi_z(x)\ \text{are defined},  
z=\wwidehat{\pi_z(x)}\ \text{and}\ R(\pi_z(x))=\pi_z(y)\}$.

\medskip

\begin{tabular}{@{}c@{}l}
&\begin{minipage}{0.4679454\textwidth}
\begin{itemize}
\item[(M4)] The partial mapping\, {$S:\Mea(E) \times \Mea(E)\to \Sh(E)$} 
given by $S(x, y)$ is defined if and only if 
the set ${\mathscr S}(x, y)=\{z\in \Sh(E)\mid z\wedge x \ \text{and}\ z\wedge y\ \text{exist}, %
z=(z\wedge x)\oplus_E (z\wedge y)\}$ has a top element 
$z_0\in {\mathscr S}(x, y)$ 
in which case $S(x, y)=z_0$.
\end{itemize}
\end{minipage}
\end{tabular}

\medskip

Whether $S(x, y)$ is defined or not we are able to decide 
in terms of the triple. Since the eventual top element $z_0$ of 
${\mathscr S}(x, y)$ is in $\Sh(E)$ our definition of ${S}(x, y)$ 
is correct.

\medskip

\noindent{\bfseries Triple Representation Theorem}
 {\em The triple $((\Sh(E),\oplus_{\Sh(E)}),$ $(\Mea(E),\oplus_{\Mea(E)}),$ $h)$
characterizes $E$ up to isomorphism within the class of all 
TRT-effect algebras.}
\medskip

We have to  construct an isomorphic copy of the original effect algebra
$E$ from the triple $(\Sh(E), \Mea(E), h)$.

\begin{lemma}\label{pommeag} Let $E$ be a TRT-effect algebra, $x, y\in \Mea(E)$. Then $x\oplus_{E} y$ exists 
in $E$
iff $S(x, y)$ is defined in terms of the triple 
$(\Sh(E), \Mea(E), h)$ and 
$\bigl(x\ominus_{\Mea(E)} (S(x, y)\wedge x)\bigr)%
\oplus_{\Mea(E)} \bigl(y\ominus_{\Mea(E)} (S(x, y)\wedge y)\bigr)$ 
exists in $\Mea(E)$ such that %
$\bigl(x\ominus_{\Mea(E)} (S(x, y)\wedge x)\bigr)%
\oplus_{\Mea(E)} \bigl(y\ominus_{\Mea(E)} (S(x, y)\wedge y)\bigr)%
\in h(S(x, y)')$. Moreover, in 
that case
$$\begin{array}{r@{}l}
x\oplus_{E} y={S(x, y)}&\oplus_{E} %
\biggl({\bigl(x\ominus_{\Mea(E)} (S(x, y)\wedge x)\bigr)}\\%
&\oplus_{\Mea(E)}%
{\bigl(y\ominus_{\Mea(E)} (S(x, y)\wedge y)\bigr)}\biggr).
\end{array}$$
\end{lemma}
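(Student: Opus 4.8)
The plan is to prove the stated equivalence in two steps and to read off the displayed formula along the way. The left-to-right implication is the substantial one and is carried out inside a block, while the converse is a purely formal orthosummation argument; I will use $z$ as shorthand for $x\oplus_E y$.

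Assume first that $z:=x\oplus_E y$ exists. Then $\{x,y\}$ is compatible (witnessed by the orthogonal family $(x,y)$), so by Statement~\ref{gejzasum}(v) there is a block $B$ with $x,y\in B$; as $B$ is a sub-effect algebra, $z\in B$ as well. Put $s:=\widetilde z$, which exists and lies in $\Sh(E)$ since $E$ is sharply dominating. By the TRT-hypothesis $[\widetilde z,z]\subseteq B$, hence $s\in B$, and therefore $s\in\Sh(E)\cap B=\C(B)$ by Statement~\ref{gejzasum}(vii). Since $s$ is central in $B$ and $x\oplus_B y=z$, Statement~\ref{gejzapulm}(i) applied in $B$ gives $s=s\wedge_B z=(s\wedge_B x)\oplus_B(s\wedge_B y)$; and since $x,y\in\Mea(E)$, $s\in\Sh(E)$ and $x,s,y\in B$, the block meets coincide with the meets in $E$ (the computation preceding (M2)), so $s=(s\wedge_E x)\oplus_E(s\wedge_E y)$, i.e. $s\in{\mathscr S}(x,y)$. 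It remains to see that $s$ is the top of ${\mathscr S}(x,y)$: for any $z'\in{\mathscr S}(x,y)$ we have $z'\wedge x\le x$, $z'\wedge y\le y$ and $x\oplus y$ defined, so by monotonicity of the orthosum $z'=(z'\wedge x)\oplus(z'\wedge y)\le x\oplus y=z$, and as $z'\in\Sh(E)$ this forces $z'\le\widetilde z=s$. Hence $S(x,y)$ is defined and equals $\widetilde z$.

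Next I extract the decomposition, now working in $E$. Writing $s:=S(x,y)$, $x_0:=x\ominus_{\Mea(E)}(s\wedge x)$ and $y_0:=y\ominus_{\Mea(E)}(s\wedge y)$ (these are well defined in $\Mea(E)$, being below $x$, resp. $y$), we have $x=(s\wedge x)\oplus_E x_0$, $y=(s\wedge y)\oplus_E y_0$ and $s=(s\wedge x)\oplus_E(s\wedge y)$. Since $z=x\oplus_E y=\bigl((s\wedge x)\oplus x_0\bigr)\oplus\bigl((s\wedge y)\oplus y_0\bigr)$ is defined, the general commutativity and associativity of $\oplus$ (axioms (Ei), (Eii)) regroup it as $\bigl((s\wedge x)\oplus(s\wedge y)\bigr)\oplus(x_0\oplus y_0)=s\oplus(x_0\oplus y_0)$; in particular $x_0\oplus_E y_0$ is defined and $z=s\oplus_E(x_0\oplus_E y_0)$. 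Therefore $z\ominus_E s=x_0\oplus_E y_0$, which lies in $\Mea(E)$ by Statement~\ref{jpy2} (it equals $z\ominus_E\widetilde z$); so $w:=x_0\oplus_{\Mea(E)}y_0$ exists, and $s\oplus_E w=z$ forces $w\le s'$, i.e. $w\in h(s')$. This establishes the right-hand side of the equivalence together with the identity $x\oplus_E y=S(x,y)\oplus_E\bigl(x_0\oplus_{\Mea(E)}y_0\bigr)$.

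Conversely, suppose $s:=S(x,y)$ is defined (so $s\in\Sh(E)$, $s\wedge x$ and $s\wedge y$ exist and $s=(s\wedge x)\oplus_E(s\wedge y)$), that $w:=x_0\oplus_{\Mea(E)}y_0$ exists in $\Mea(E)$ with $x_0,y_0$ as above, and that $w\in h(s')$, i.e. $w\le s'$. Then $s\oplus_E w$ is defined, and applying (Ei), (Eii) to the quadruple $(s\wedge x,\,s\wedge y,\,x_0,\,y_0)$ — whose sum, grouped as $\bigl((s\wedge x)\oplus(s\wedge y)\bigr)\oplus(x_0\oplus y_0)=s\oplus w$, is defined — shows that the regrouping $\bigl((s\wedge x)\oplus x_0\bigr)\oplus\bigl((s\wedge y)\oplus y_0\bigr)=x\oplus y$ is also defined and equals $s\oplus w$. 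Thus $x\oplus_E y$ exists and equals $S(x,y)\oplus_E\bigl(x_0\oplus_{\Mea(E)}y_0\bigr)$, completing the proof.

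The main obstacle is the forward direction: one has to place $x$, $y$ \emph{and} the sharp part $\widetilde{x\oplus y}$ into one block — this is exactly where the TRT-hypothesis $[\widetilde t,t]\subseteq B$ is used — and then transfer the centrality identity $s=(s\wedge_B x)\oplus_B(s\wedge_B y)$ from $B$ back to $E$ via the coincidence of block meets with meets in $E$ for a sharp and a meager element. Verifying that $S(x,y)$ really is $\widetilde{x\oplus y}$ (the ``top element'' clause in the definition of $S$) is the other point requiring care; granting these, the decomposition and the converse reduce to routine manipulations of the orthosummation.
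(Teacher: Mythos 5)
Your proof is correct and follows essentially the same route as the paper's: place $x$, $y$ and $z=x\oplus_E y$ in a block, use the TRT-hypothesis $[\widetilde z,z]\subseteq B$ to get $\widetilde z\in\C(B)$, apply Statement~\ref{gejzapulm}(i) and the coincidence of $\wedge_B$ with $\wedge_E$ to show $S(x,y)=\widetilde z$, then read off the meager part via Statement~\ref{jpy2}, with the converse being the same associativity regrouping. The only cosmetic difference is that you work with $\widetilde z$ directly where the paper writes the decomposition $z=z_S\oplus z_M$ first, which is the same thing.
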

\begin{proof} 
Assume first that $x\oplus_{E} y$ exists 
in $E$ and let us put $z=x\oplus_{E} y$. Since $E$  is 
sharply dominating we have that
$z=z_S\oplus_{E}  z_M$ such that $z_S\in \Sh(E)$ and 
$z_M\in \Mea(E)$. Since $x\comp y$ by Statement 
\ref{gejzasum}, (e) there is  a  block $B$ 
of $E$ such that $x, y, z\in B$. 
Moreover, $E$ being a TRT-effect algebra yields that $[z_S, z]\subseteq B$ and we obtain 
that $z_S,  z_M\in B$. Therefore $z_S\in \C(B)$ and by Statement 
\ref{gejzapulm}, (i) we have that 
 $z_S= z_S \wedge (x\oplus_{E} y)= z_S \wedge (x\oplus_{B} y)= 
 (z_S \wedge_{B} x)\oplus_{B} (z_S \wedge_{B} y)= %
(z_S \wedge x)\oplus_{E} (z_S \wedge y)$. Hence $z_S\in {\mathscr S}(x, y)$. 
Now, assume that $u\in {\mathscr S}(x, y)$. Then 
$u=(u \wedge x)\oplus_{E} (u \wedge y)\leq x\oplus_{E} y$. Since 
$u\in \Sh(E)$ we have that $u\leq z_S$, i.e., $z_S$ is the top 
element of ${\mathscr S}(x, y)$. Moreover, we have 
$$
\begin{array}{@{}r@{\,}l@{\,}l}
z_S&\oplus_E& z_M=x\oplus_{E} y\\
&=&\biggl(\bigl((S(x, y)\wedge x)\bigr)\oplus_{E} %
\bigl(x\ominus_{E} (S(x, y)\wedge x)\bigr)\biggr)\\[0.5cm]
&\oplus_E &\biggl(\bigl((S(x, y)\wedge y)\bigr)\oplus_{E}
 \bigl(y\ominus_{E} (S(x, y)\wedge y)\bigr)\biggr)\\
&=&{S(x, y)}\oplus_{E} %
\biggl(\bigl(x\ominus_{\Mea(E)} (S(x, y)\wedge x)\bigr)\\[0.5cm]
&&\phantom{S(x, y)}\oplus_{E}%
\bigl(y\ominus_{\Mea(E)} (S(x, y)\wedge y)\bigr)\biggr).
 \end{array}
$$
It follows that $z_M=\bigl(x\ominus_{\Mea(E)} (S(x, y)\wedge x)\bigr)%
\oplus_{E} \bigl(y\ominus_{\Mea(E)} (S(x, y)\wedge y)\bigr)$ and evidently 
$z_M\in h(z_S')$.

Conversely, let us assume that $S(x, y)$ is defined in terms of  $(\Sh(E), \Mea(E), h)$,  
$\bigl(x\ominus_{\Mea(E)} (S(x, y)\wedge x)\bigr)%
\oplus_{\Mea(E)} \bigl(y\ominus_{\Mea(E)} (S(x, y)\wedge y)\bigr)$ 
exists in $\Mea(E)$ and %
$\bigl(x\ominus_{\Mea(E)} (S(x, y)\wedge x)\bigr)%
\oplus_{\Mea(E)} \bigl(y\ominus_{\Mea(E)} (S(x, y)\wedge y)\bigr)%
\in h(S(x, y)')$. Then 
$\bigl(x\ominus_{\Mea(E)} (S(x, y)\wedge x)\bigr)%
\oplus_{\Mea(E)} \bigl(y\ominus_{\Mea(E)} (S(x, y)\wedge y)\bigr)%
\leq S(x, y)'$, i.e., 
$$
\begin{array}{r c l}
z&=&S(x, y)\oplus_{E}%
\biggl(\bigl(x\ominus_{\Mea(E)} (S(x, y)\wedge x)\bigr)\\%
&&\phantom{S(x, y)}%
\oplus_{\Mea(E)} \bigl(y\ominus_{\Mea(E)} (S(x, y)\wedge y)\bigr)\biggr)\\
&=&\biggl(\bigl(S(x, y)\wedge x\bigr)\oplus_{E} %
\bigl(S(x, y)\wedge y\bigr)\biggr) \oplus_{E}\\[0.5cm]%
&&\biggl(\bigl(x\ominus_{E} (S(x, y)\wedge x)\bigr)%
\oplus_{E} \\
&&\bigl(y\ominus_{E} (S(x, y)\wedge y)\bigr)\biggr)=x\oplus_{E} y\\
\end{array}
$$
\noindent{}is defined.
\end{proof}

{\renewcommand{\labelenumi}{{\normalfont  (\roman{enumi})}}
\begin{theorem}\label{tripletheor}
Let E be a TRT-effect algebra. 
Let $\Tea(E)$ be a subset
of $\Sh(E) \times \Mea(E)$ given by
$$\Tea(E) =\{(z_S, z_M)\in \Sh(E) \times \Mea(E) \mid z_M \in  h(z_S')\}.$$
Equip $\Tea(E)$ with a partial binary operation $\oplus_{\Tea(E)}$ with 
$(x_S, x_M) \oplus_{\Tea(E)} (y_S, y_M)$ is defined if and  only if 
\begin{enumerate}
\item $S(x_M, y_M)$ is defined,
\item $z_S=x_S\oplus_{\Sh(E)} y_S \oplus_{\Sh(E)} S(x_M, y_M)$ is defined,
\item $z_M=\biggl(x_M\ominus_{\Mea(E)} \bigl(S(x_M, y_M)\wedge x_M\bigr)\biggr)%
\oplus_{\Mea(E)} %
\biggl(y_M\ominus_{\Mea(E)} \bigl(S(x_M, y_M)\wedge y_M\bigr)\biggr)$ is defined,
\item $z_M\in h(z_S')$. 
\end{enumerate}
In this case $(z_S, z_M)=(x_S, x_M) \oplus_{\Tea(E)} (y_S, y_M)$.
Let\/ $0_{\Tea(E)}=(0_E,0_E)$ and\/ $1_{\Tea(E)}=(1_E,0_E)$. Then 
$\Tea(E)=(\Tea(E), \oplus_{\Tea(E)}, 0_{\Tea(E)},$ $1_{\Tea(E)})$ 
is an effect algebra and the mapping 
$\varphi:E\to \Tea(E)$  given by 
$\varphi(x) = (\widetilde{x}, x\ominus_{E} \widetilde{x})$   
is an isomorphism of effect algebras.
\end{theorem}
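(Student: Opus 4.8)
The plan is to prove that $\varphi$ is a bijection of $E$ onto $\Tea(E)$ which carries the partial operation $\oplus_E$ exactly onto $\oplus_{\Tea(E)}$ and the constants $0_E,1_E$ onto $0_{\Tea(E)},1_{\Tea(E)}$; once this is done, the effect‑algebra axioms (Ei)--(Eiv) and the condition $0\neq 1$, being invariant under isomorphisms of partial algebras with distinguished constants and holding in $E$, automatically hold in $\Tea(E)$, so that $\Tea(E)$ is an effect algebra and $\varphi$ is an isomorphism of effect algebras.

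First I would check that $\varphi$ is well defined with range in $\Tea(E)$. Since $E$ is sharply dominating, $\widetilde{x}$ exists for each $x\in E$, and by Statement \ref{jpy2} we have $x\ominus_E\widetilde{x}\in\Mea(E)$ and $x=\widetilde{x}\oplus_E(x\ominus_E\widetilde{x})$; the last equality gives $x\ominus_E\widetilde{x}\le(\widetilde{x})'$, i.e. $x\ominus_E\widetilde{x}\in h((\widetilde{x})')$, so $\varphi(x)\in\Tea(E)$. Because $0_E$ is the only sharp element below $0_E$ and $1_E\in\Sh(E)$, we get $\widetilde{0_E}=0_E$ and $\widetilde{1_E}=1_E$, hence $\varphi(0_E)=(0_E,0_E)=0_{\Tea(E)}$ and $\varphi(1_E)=(1_E,0_E)=1_{\Tea(E)}$. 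Injectivity is clear since $x$ is recovered from $\varphi(x)$ via $x=\widetilde{x}\oplus_E(x\ominus_E\widetilde{x})$. For surjectivity, given $(z_S,z_M)\in\Tea(E)$ we have $z_M\le(z_S)'$, so $z:=z_S\oplus_E z_M$ exists, and by the uniqueness of the sharp--meager decomposition in Statement \ref{jpy2} we get $\widetilde{z}=z_S$ and $z\ominus_E\widetilde{z}=z_M$, i.e. $\varphi(z)=(z_S,z_M)$.

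The core of the argument is the following claim: for $x,y\in E$, writing $(x_S,x_M)=\varphi(x)$, $(y_S,y_M)=\varphi(y)$, the orthosum $x\oplus_E y$ exists in $E$ if and only if conditions (1)--(4) of the theorem are satisfied, and then $\varphi(x\oplus_E y)=(z_S,z_M)$ with $z_S,z_M$ as in the theorem. For the forward implication, from $x\oplus_E y=(x_S\oplus_E x_M)\oplus_E(y_S\oplus_E y_M)$ and the generalized associativity and commutativity of $\oplus_E$ one deduces that $x_M\oplus_E y_M$ exists; Lemma \ref{pommeag} then yields that $S(x_M,y_M)$ is defined (condition (1)), that $r:=\bigl(x_M\ominus_{\Mea(E)}(S(x_M,y_M)\wedge x_M)\bigr)\oplus_{\Mea(E)}\bigl(y_M\ominus_{\Mea(E)}(S(x_M,y_M)\wedge y_M)\bigr)$ exists in $\Mea(E)$ (condition (3)), and that $x_M\oplus_E y_M=S(x_M,y_M)\oplus_E r$. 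Reassociating the four-fold sum, $x\oplus_E y=\bigl(x_S\oplus_E y_S\oplus_E S(x_M,y_M)\bigr)\oplus_E r$; since $\Sh(E)$ is a sub-effect algebra of $E$ the first bracket is a sharp element and equals $z_S=x_S\oplus_{\Sh(E)}y_S\oplus_{\Sh(E)}S(x_M,y_M)$ (condition (2)), while $r\le(z_S)'$ is condition (4), and the uniqueness of the sharp--meager decomposition of $x\oplus_E y$ identifies $\widetilde{x\oplus_E y}=z_S$ and $(x\oplus_E y)\ominus_E\widetilde{x\oplus_E y}=r=z_M$. For the converse, assume (1)--(4); then $S(x_M,y_M)\le z_S$, so $z_M=r\le(z_S)'\le S(x_M,y_M)'$, whence $r\in h(S(x_M,y_M)')$ and Lemma \ref{pommeag} gives that $x_M\oplus_E y_M$ exists and equals $S(x_M,y_M)\oplus_E r$; combining this with the existence of $x_S\oplus_{\Sh(E)}y_S$ and of $z_S\oplus_E r=z_S\oplus_E z_M$, generalized associativity produces $x\oplus_E y=(x_S\oplus_E x_M)\oplus_E(y_S\oplus_E y_M)=z_S\oplus_E z_M$, and again $\varphi(x\oplus_E y)=(z_S,z_M)$.

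Since by surjectivity every element of $\Tea(E)$ is of the form $\varphi(x)$, the claim says exactly that $\varphi$ is an isomorphism of the partial algebras $(E;\oplus_E,0_E,1_E)$ and $(\Tea(E);\oplus_{\Tea(E)},0_{\Tea(E)},1_{\Tea(E)})$, which finishes the proof as explained in the first paragraph. I expect the main obstacle to lie entirely in the bookkeeping of this claim: in the converse direction one must observe $S(x_M,y_M)\le z_S$ in order to derive $z_M\le S(x_M,y_M)'$ and so bring Lemma \ref{pommeag} into play, and in both directions one has to justify that the various reassociations and reorderings of the orthogonal family $(x_S,x_M,y_S,y_M)$ are legitimate and that passing between $\oplus_E$ and $\oplus_{\Sh(E)}$ on sharp elements is harmless because $\Sh(E)$ is a sub-effect algebra of $E$.
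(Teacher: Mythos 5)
Your proposal is correct and follows essentially the same route as the paper: well-definedness, injectivity and surjectivity of $\varphi$ via the unique sharp--meager decomposition of Statement \ref{jpy2}, preservation of $0$ and $1$, and then the equivalence ``$x\oplus_E y$ exists iff conditions (1)--(4) hold for $\varphi(x),\varphi(y)$'' established through Lemma \ref{pommeag} together with the fact that $\Sh(E)$ is a sub-effect algebra, after which the effect-algebra structure on $\Tea(E)$ is transported along the bijection. Your explicit observation in the converse direction that $S(x_M,y_M)\le z_S$ forces $z_M\le S(x_M,y_M)'$, so that Lemma \ref{pommeag} applies, is exactly the step the paper's chain of equivalences handles implicitly.
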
}
\begin{proof} Evidently, $\varphi$ is correctly defined since, 
for any $x\in E$, we have that $x=\widetilde{x}\oplus_{E} (x\ominus \widetilde{x})=%
x_S\oplus_{E} x_M$, $x_S\in \Sh(E)$ and $x_M\in \Mea(E)$. Hence 
$\varphi(x) = (x_S, x_M)\in \Sh(E) \times \Mea(E)$ and $x_M\in h(x_S')$. Let us check that $\varphi$ is 
bijective. Assume first that $x, y\in E$ such that $\varphi(x) =\varphi(y)$. 
We have $x=\widetilde{x}\oplus_{E} (x\ominus_{E} \widetilde{x})=%
\widetilde{y}\oplus_{E} (y\ominus_{E} \widetilde{y})=y$. Hence 
$\varphi$ is injective. Let $(x_S, x_M)\in \Sh(E) \times \Mea(E)$ and 
$x_M\in h(x_S')$. This yields that $x=x_S\oplus_{E} x_M$ exists and 
evidently by Lemma \ref{jpy2}, (i) $\widetilde{x}=x_S$ and $x\ominus_{E} \widetilde{x}=x_M$.
It follows that $\varphi$ is surjective. Moreover, 
$\varphi(0_{E})=(0_E,0_E)=0_{\Tea(E)}$ and\/ $\varphi(1_{E})=(1_E,0_E)=1_{\Tea(E)}$.

Now, let us check that, for all $x, y\in E$, $x\oplus_E y$ 
is defined iff $\varphi(x)\oplus_{\Tea(E)} \varphi(y)$ is defined 
in which case $\varphi(x\oplus_E y)=\varphi(x)\oplus_{\Tea(E)} \varphi(y)$.  
For any $x, y, z\in E$ we obtain 
\begin{center}
\begin{tabular}{@{}c r c l}
\multicolumn{4}{@{}l}{$z=x\oplus_E y$  $\iff$}\\ 
\multicolumn{4}{@{}l}{$z=\bigl(\widetilde{x}%
\oplus_E (x\ominus_{E} \widetilde{x})\bigr)%
\oplus_E \bigl(\widetilde{y}\oplus_E (y\ominus_{E} \widetilde{y})\bigr)$%
\, $\iff$} \\[0.1cm]
\multicolumn{4}{@{}l}{$z=(\widetilde{x}\oplus_E \widetilde{y})%
\oplus_E \bigl((x\ominus_{E} \widetilde{x})%
\oplus_E (y\ominus_{E} \widetilde{y})\bigr)$\, $\iff$} \ %
\\[0.2cm]
\multicolumn{4}{@{}l}{by Lemma \ref{pommeag}\ $(\exists u\in E)\ %
u=S(x\ominus_{E} \widetilde{x}, y\ominus_{E} \widetilde{y})$}\\
\multicolumn{4}{@{}l}{  and}\\ %
\multicolumn{4}{@{}l}{$\begin{array}{@{}r@{}l}
z=(\widetilde{x}&\oplus_E \widetilde{y})%
\oplus_E\\[0.2cm]
&\biggl(u\oplus_{E}\bigl((x\ominus_{E} \widetilde{x})\ominus_{E} (u\wedge (x\ominus_{E} \widetilde{x}))\bigr)\oplus_{E}\\
&\bigl((y\ominus_{E} \widetilde{y})\ominus_{E} (u\wedge (y\ominus_{E} \widetilde{y}))\bigr)\biggr)\ 
\end{array}$ }\\
$\iff$&\multicolumn{3}{@{}l}{$(\exists u\in E)\ %
u=S(x\ominus_{E} \widetilde{x}, y\ominus_{E} \widetilde{y})$  and}\\ %
\multicolumn{4}{@{}l}{$\begin{array}{r@{}l}
z=(\widetilde{x}\oplus_E \widetilde{y}&%
\oplus_E u)\oplus_{E}\\[0.2cm]
&\biggl(\bigl((x\ominus_{E} \widetilde{x})\ominus_{E} (u\wedge (x\ominus_{E} \widetilde{x}))\bigr)\oplus_{E}\\
&\phantom{(}\bigl((y\ominus_{E} \widetilde{y})\ominus_{E} (u\wedge (y\ominus_{E} \widetilde{y}))\bigr)\biggr)\ 
\end{array}$ }\\
$\iff$&\multicolumn{3}{@{}l}{$(\exists u\in E)\ %
u=S(x\ominus_{E} \widetilde{x}, y\ominus_{E} \widetilde{y})$  and}\\[0.1cm] %
\multicolumn{4}{@{}l}{$\begin{array}{r@{}l}
z=(\widetilde{x}&\oplus_{\Sh(E)} \widetilde{y}%
\oplus_{\Sh(E)}u)\oplus_{E} \\
&\biggl(\bigl((x\ominus_{E} \widetilde{x})\ominus_{\Mea(E)} %
(u\wedge (x\ominus_{E} \widetilde{x}))\bigr)\oplus_{\Mea(E)}\\
&\phantom{(}\bigl((y\ominus_{E} \widetilde{y})\ominus_{\Mea(E)} %
(u\wedge (y\ominus_{E} \widetilde{y}))\bigr)\biggr)\ 
\end{array}$ }\\
\end{tabular}
\begin{tabular}{@{}c r c l}
$\iff$&\multicolumn{3}{@{}l}{$(\widetilde{x}, x\ominus_{E} \widetilde{x}) %
\oplus_{\Tea(E)} (\widetilde{y}, y\ominus_{E} \widetilde{y})$ is defined}\\
\multicolumn{4}{@{}l}{  and}\\ %
\multicolumn{4}{@{}l}{%
$\begin{array}{@{}r@{}c@{}l@{}l}\varphi(z)&=&%
\biggl(&\widetilde{x}\oplus_{\Sh(E)}\widetilde{y}\oplus_{\Sh(E)}\\%
&&&S(x\ominus_{E} \widetilde{x}, y\ominus_{E} \widetilde{y}), %
\bigl((x\ominus_{E} \widetilde{x}) \ominus_{\Mea(E)} \\%
&&&(S(x\ominus_{E} \widetilde{x}, y\ominus_{E} \widetilde{y})%
{\wedge} (x\ominus_{E} \widetilde{x}))\bigr)\oplus_{\Mea(E)}\\ 
&&&\bigl((y\ominus_{E} \widetilde{y})\ominus_{\Mea(E)} \\%
&&&%
(S(x\ominus_{E} \widetilde{x}, y\ominus_{E} \widetilde{y})%
\wedge (y\ominus_{E} \widetilde{y}))\bigr)\biggr)\\
&=&(&\widetilde{x}, x\ominus_{E} \widetilde{x}) %
\oplus_{\Tea(E)} (\widetilde{y}, y\ominus_{E} \widetilde{y})\\
&=& &%
\varphi(x)\oplus_{\Tea(E)} \varphi(y).
\end{array}$}
\end{tabular}
\end{center}

Altogether, $\Tea(E)=(\Tea(E), \oplus_{\Tea(E)}, 0_{\Tea(E)},$ $1_{\Tea(E)})$ 
is an effect algebra and the mapping $\varphi:E\to \Tea(E)$     
is an isomorphism of effect algebras.
\end{proof}

The Triple Representation Theorem then follows immediately.

\begin{remark}\label{larem}{\rm First, note that 
any sharply dominating Archimedean 
atomic  lattice effect algebra is a TRT-effect algebra. 
Namely, any  lattice effect algebra is homogeneous. 
From \cite[Theorem 2.10, (iv)]{niepa} we get that 
$[\widetilde{x}, x]\subseteq B$, for every block $B$ of $E$ and 
every $x \in B$. The remaining condition follows from 
\cite[Theorem 2.10, (v)]{niepa}.\\
Second, any homogeneous orthocomplete effect algebra is a TRT-effect algebra in virtue of Statement \ref{honzazasum}. This 
immediately yields that any complete lattice effect algebra 
is  a TRT-effect algebra.}
\end{remark}

\section*{Acknowledgements}  J. Paseka gratefully acknowledges Financial Support 
of the  Ministry of Education of the Czech Republic
under the project MSM0021622409 and of Masaryk University under the grant 0964/2009. 
Both authors acknowledge the support by ESF Project CZ.1.07/2.3.00/20.0051
Algebraic methods in Quantum Logic of the Masaryk University.

\end{document}